\newcommand*\bigcdot{\mathpalette\bigcdot@{.3}}
\theoremstyle{definition}
\newtheorem{theorem}{Theorem}[section]
\newtheorem{corollary}{Corollary}[section]
\newtheorem{remark}{Remark}[section]
\numberwithin{equation}{section}
\begin{document}
	\title{Variance Gamma (non-local) equations}
	\author{Fausto Colantoni}%\corref{cor1}}

	%\cortext[cor1]{Corresponding author}
	\address{Department of Basic and Applied Sciences for Engineering\\
		Sapienza University of Rome, Rome Italy}
		\email{fausto.colantoni@uniroma1.it}
	\begin{abstract}
		We provide some equations for the Variance Gamma process due to the fact that we do not consider only the definition as a time-changed Brownian motion. This brings us to a new non-local equation, even true in the drifted case, involving generalized Weyl derivatives. Then we focus on the connection to special functions and we study a space equation for our process. At the end, we conclude by observing the convergence in distribution of a compound Poisson process to the Variance Gamma process.		
		%		and the connection between this process, non-local operators (deriving from fractional calculus) and special functions is analyzed. At the end we extend our non-local equation in presence of a drift and we conclude by observing the convergence in distribution of a compound Poisson process to the Variance Gamma process.
	\end{abstract}
\keywords{Variance Gamma process, fractional calculus, Gamma subordinator, non-local equations
}

	\maketitle
	
	\section{Introduction}
	The Variance Gamma process is a famous L$\acute{e}$vy process used in mathematical finance (see \cite{madan1998variance}), also known as Laplace motion (\cite{kotz2001laplace}). It can be obtained by considering a Brownian motion
	with a random time given by an independent Gamma subordinator. The concept of subordination has been introduced by Bochner (\cite{bochner1949diffusion}) and, as for the other subordinated processes, we can associate the Phillips' operator (\cite{phillips1952generation}) for the governing equation. Recently, some new equations for the Variance Gamma process and the Gamma subordinator are provided in \cite{beghin2014geometric}, involving time-operators differently from classic theory on space operators.  
	
	At the current stage the non-local equation for the Variance Gamma process, defined as difference of two independent Gamma subordinators, is not considered and also a deeper analysis concerning this process and the equations for the modified Bessel functions is possible. In order to close such a gap, we focus on these equations. Then, we continue our dissertation by examining the similar version of the non-local equation for the Variance Gamma process with a drift. This is possible because, also in presence of drift, the definition as difference of two independent Gamma subordinators holds. At the end, we consider the compound Poisson process, related to the Gamma subordinator (the construction for any subordinator is developed in \cite[Proposition 3.3]{toaldo}), and its convergence (in distribution) to the Variance Gamma process. 
	
	\section{Preliminaries}
	Let $\Phi: (0, \infty) \mapsto (0, \infty)$ be a Bernstein function, which is uniquely defined by the so-called Bernstein representation
	\begin{align*}
	\Phi(\lambda)=\int_0^\infty (1-e^{-\lambda z}) \Pi(dz) , \quad \lambda > 0
	\end{align*}
	where $\Pi$ on $(0,\infty)$ with $\int_0^\infty (1 \wedge z) \Pi(dz) <\infty$ is the associated L$\acute{e}$vy measure. We also recall that
	\begin{align}
	\label{lapTailLm}
	\frac{\Phi(\lambda)}{\lambda}=\int_0^\infty e^{-\lambda z} \overline{\Pi}(z) dz, \quad \lambda>0
	\end{align}
	where $\overline{\Pi}(z)=\Pi((z,\infty))$ is termed \textit{tail of the L$\acute{e}$vy measure}.

	We focus only on the Laplace symbol
	\begin{align}
	\label{symbGamma}
	\Phi(\lambda) = a \ln \left(1 + \frac{\lambda}{b} \right) = a \int_0^\infty \left(1 - e^{- \lambda y} \right) \frac{e^{- b y}}{y}\, dy, \quad \lambda > 0, \quad a>0, \; b>0.
	\end{align}
	Thus, in this case, the L$\acute{e}$vy measure is $\Pi_{a,b}(dy)=a\frac{e^{- b y}}{y}\, dy$ and the associated Gamma subordinator $H=\{H_t, t \geq 0\}$, starting from zero, is such that
	\begin{align}
	\label{LapH}
	\mathbf{E}_0[e^{-\lambda H_t}] = e^{-t \Phi(\lambda)}, \quad \lambda > 0
	\end{align}
	where $\mathbf{P}_x$ denotes the probability measure for a process started from $x$ at time $t=0$ and $\mathbf{E}_x$ the mean value with respect to $\mathbf{P}_x$.
	The interest reader can consult \cite[Chapter III]{bertoin1996levy} for more details on subordinators. Since $\Pi_{a,b}(0,\infty)=\infty$, then from \cite[Theorem 21.3]{ken1999levy}, we have that $H$ has increasing sample path with jumps. We use the notation
	\begin{align*}
	\mathbf{P}(H_t \in dx) = h(t,x)\,dx.
	\end{align*}
	In addition, it is well known that, $\forall\, t > 0$,
	\begin{align}
	\label{repHsing} 
	h(t,x) 
	= & \left\lbrace
	\begin{array}{ll}
	\displaystyle \frac{b^{at}}{\Gamma(at)} x^{at-1} e^{-bx}, & x>0\\
	\displaystyle  0, & x \leq 0
	\end{array}
	\right.
	\end{align}
	trivially verifies
	\begin{align}
	\label{laph}
	\displaystyle \int_0^\infty e^{-\lambda x} h(t,x)\, dx = \frac{b^{at}}{(\lambda + b)^{at}} = e^{- t\, a\,\ln\left(1+ \frac{\lambda}{b}\right)}, \quad \lambda>0, \ t >0,
	\end{align}
	which coincides with formula \eqref{LapH}. We observe that the continuity of the function $h(t,x)$, when $x\to 0$, depends by the time variable $t$, indeed the Gamma subordinator has the \textit{time dependent property} (see \cite[Chapter 23]{ken1999levy}). From this, also the Variance Gamma process inherits continuity problems for its probability density function.
	
	Let $B:=\{B_t, t \geq 0\}$ be the one dimensional Brownian motion starting from zero, independent from $H$, and $g(t, x) = e^{-x^2/ 4t}/\sqrt{4 \pi t}$ be its probability density function. The Variance Gamma process $X:=\{X_t, t \geq 0\}$ can be defined as $X=B_H := B \circ H$, then it is a Brownian motion time-changed with a random clock given by an independent Gamma subordinator. Its probability density function is
	\begin{align}
	\label{ghVG}
	p(t,x)=\int_0^\infty g(s,x) h(t,s) ds,
	\end{align} 
	and the L$\acute{e}$vy symbol is $\Phi(\xi^2)$, as we see from
	\begin{align}
	\label{charVG}
	\mathbf{E}_0[e^{i \xi X_t}]=\hat{p}(t,x)=\int_{-\infty}^\infty e^{i \xi x} p(t,x) dx &= \int_{-\infty}^\infty e^{i \xi x} \int_0^\infty g(s,x) h(t,s) ds \, dx \notag\\ 
	&=\int_0^\infty e^{-s \xi^2} h(t,s) ds \notag\\ 
	&=e^{-t \Phi(\xi^2)}=e^{-at \ln\left(1+\frac{\xi^2}{b} \right)}.
	\end{align}
	For the Variance Gamma process we know an explicit representation for $p(t,x)$. From [\cite{table}, formula 3.478] we have
	\begin{align}
	\label{4.28a}
	\displaystyle
	\int_{0}^{\infty} x^{\nu -1} \exp \{-\beta x^{q} -\alpha x^{-q} \} dx= \frac{2}{q} \left(\frac{\alpha}{\beta}\right)^{\frac{\nu}{2q}} K_{\frac{\nu}{q}}\left(2 \sqrt{\alpha \beta}\right) \ , \ \ \ q, \alpha, \beta , \nu >0
	\end{align}
	where $K_\nu$ is the modified Bessel function, so we get that
	\begin{align}
	\label{gh}
	\int_0^\infty g(s,x) \, h
	(t,s)\, ds &= \int_0^\infty \frac{e^{-\frac{x^2}{4s}}}{\sqrt{4\pi s}} \frac{b^{at}}{\Gamma(at)} s^{at-1}  e^{-b s}\, ds= \notag
	\\
	&= \frac{b^{at}}{\sqrt{4 \pi}} \frac{1}{\Gamma(at)} \int_{0}^{\infty} \frac{s^{at-1}}{\sqrt{s}} \exp\left(-\frac{x^2}{4}s^{-1} -bs \right) ds,
	\end{align}
	then we use (\ref{4.28a}) by choosing $\nu=at-\frac{1}{2}, \ q=1, \ \alpha=\frac{z^2}{4}, \ \beta=b$ and we obtain
	\begin{align}
	\label{densityVG}
	p(t,x)=\int_0^\infty g(s,x) \, h
	(t,s)\, ds &=\frac{b^{at}}{\sqrt{4 \pi}} \frac{1}{\Gamma(at)} \ 2 \left(\frac{x^2}{4b}\right)^{\frac{1}{2} (at -\frac{1}{2})} K_{at-\frac{1}{2}} \left(2 \sqrt{\frac{x^2}{4} b} \right) = \notag \\
	&=\frac{b^{at}}{\sqrt{4 \pi}} \frac{1}{\Gamma(at)} \ 2 \left(\frac{x^2}{4b}\right)^{\frac{1}{2} (at -\frac{1}{2})} K_{at-\frac{1}{2}} \left(\vert x \vert \sqrt {b} \right)= \notag\\
	&=\frac{b^{at}}{\sqrt{\pi}} \frac{1}{\Gamma(at)} \left(\frac{1}{2 \sqrt{b}}\right)^{(at -\frac{1}{2})} \vert x \vert ^{at-\frac{1}{2}} \ K_{at-\frac{1}{2}}(\vert x \vert \sqrt{b}).
	\end{align}
	
	Since we are dealing with a time-changed stochastic process, we can use the Phillips' representation (or Bochner's subordination) to generate a new operator through subordination. It is well known that the Phillips' operator $-\Phi(-\Delta)$ (\cite{phillips1952generation}) can be written as
	\begin{align}
	\label{Phillips}
	-\Phi(-\Delta) u(x)= a \int_{0}^\infty (G_y u(x) - u(x) ) \frac{e^{- b y}}{y} dy,  
	\end{align}
	where $\Phi$ is defined in \eqref{symbGamma} and $G$ is the semigroup associated to the Brownian motion on $\mathbb{R}$, such that %$G_t=e^{t \Delta}$ and 
	the characteristic symbol is $\widehat{G}_t=e^{-t\xi^2}$. This leads us to
	\begin{align}
	\label{charPhillips}
	\int_{-\infty}^\infty e^{i \xi x} \left(-\Phi(-\Delta) u(x) \right) dx&=\left(a\int_{0}^\infty (e^{-y\xi^2}-1)  \frac{e^{- b y}}{y} dy\right) \hat{u}(\xi) \notag \\
	&= \left({-a \ln\left(1+\frac{\xi^2}{b} \right)} \right) \hat{u}(\xi).
	\end{align}
	If we combine \eqref{charVG} and \eqref{charPhillips}, we trivially obtain the well known result
	\begin{align}
	\label{eq:Phillips}
	\frac{\partial}{\partial t} p(t,x)= -\Phi(-\Delta) p(t,x), \quad t>0, \, x\in \mathbb{R}
	\end{align} 
	with $p(t,0)=\delta(x)$.
	%\begin{remark}
	%	The classic theory for L$\acute{e}$vy processes, by using the infinitesimal generator of the semigroup for $X$, is possible and the L$\acute{e}$vy measure for the Variance Gamma process is given in \cite[(14)]{madan1998variance} (their notation is $a=b=1/\nu$).
	%\end{remark}
	
	If we change prospective and consider the operator in time, an interesting equation, presented in \cite[Remark 3.3]{beghin2014geometric}, for the Variance Gamma process is
	\begin{align*}
	\frac{\partial^2}{\partial x^2} p(t,x)=b \left(p(t,x) - p\left(t-\frac{1}{a},x\right) \right), \quad x \in \mathbb{R}
	\end{align*}
	with $p(t,0)=\delta(x)$ and $at>1$.
	\begin{remark}
		The condition $at>1$ do not surprise us, since $h(t,\cdot)$ is continuous when $at>1$.
	\end{remark}
	\section{Main results}
	\subsection{Non-local equations}\hfill\\
	In the last section we have seen how in \eqref{eq:Phillips}, from the definition of time-changed process, we can associate to the Variance Gamma process a non-local operator. However, it is not the only possible definition. From \eqref{charVG}, we see that
	\begin{align*}
	e^{-at \ln\left(1+\frac{\xi^2}{b} \right)}=\left(1+\frac{\xi^2}{b}\right)^{-at}=\left(1-i\frac{\xi}{\sqrt{b}}\right)^{-at}\left(1+i\frac{\xi}{\sqrt{b}}\right)^{-at},
	\end{align*}
	then we have an other definition for our process
	\begin{align}
	\label{defVG:difGamma}
	X=G - L
	\end{align}
	where $G$ and $L$ are two independent Gamma subordinators both with parameters $a$ and $\sqrt{b}$. From a financial point of view this representation has the meaning of the difference between independent 'gains' and 'losses' (see \cite{madan1990variance}). This leads us to investigate the connection between the Variance Gamma process and the non-local operators of the Gamma subordinator.
	
	Let us introduce the following generalized Weyl derivatives, for $x \in \mathbb{R}$,
	\begin{align}
	\label{D+}
	\mathcal{D}_{a,b}^+ u(x)&:= \frac{\partial}{\partial x} \int_{-\infty}^{x}  u(s) \overline{\Pi}_{a,b} (x-s) ds,\\
	\label{D-}
	\mathcal{D}_{a,b}^- u(x)&:=-\frac{\partial}{\partial x} \int_{x}^{\infty}  u(s) \overline{\Pi}_{a,b} (s-x) ds,
	\end{align}
	respectively defined for function $u$ such that
	\begin{align*}
	u(s) \overline{\Pi}_{a,b}(x-s) \in L^1(-\infty, x) \quad \text{and} \quad u(s) \overline{\Pi}_{a,b}(s-x) \in L^1(x,\infty), \quad \forall x \in \mathbb{R}.
	\end{align*}
	%Based on \cite[Definition 2.8]{toaldo}, let use define the following generalized Weyl derivatives, for $x \in \mathbb{R}$,
	%\begin{align}
	%\label{D+}
	%\mathcal{D}_{a,b}^+ u(x)&:=\int_{0}^{\infty} \frac{\partial}{\partial x} u(x-s) \overline{\Pi}_{a,b} (s) ds\\
	%\label{D-}
	%\mathcal{D}_{a,b}^- u(x)&:=-\int_{0}^{\infty} \frac{\partial}{\partial x} u(x+s) \overline{\Pi}_{a,b} (s) ds
	%\end{align}
	%where $\overline{\Pi}_{a,b} (s)$ is the \textit{tail of the Lévy measure} associated to a Gamma subordinator with parameters $a,b$, that is
	%\begin{align*}
	%\overline{\Pi}_{a,b} (s)= a \int_s^\infty \frac{e^{- b y}}{y}\, dy.
	%\end{align*}
	%\begin{remark}
	%	The two operators \eqref{D+} and \eqref{D-} can be defined for $u^\prime \in L^\infty(0,\infty)$, indeed from H$\ddot{o}$lder's inequality we have
	%	\begin{align*}
	% \vert \mathcal{D}_{a,b}^+ u(x) \vert &\leq \vert \vert u^\prime \vert \vert_\infty \vert \vert \overline{\Pi}_{a,b} \vert \vert_1\\
	% &=\vert \vert u^\prime \vert \vert_\infty \left(\lim_{\lambda \to 0} \frac{\Phi(\lambda)}{\lambda}\right)
	% \\
	% &=\vert \vert u^\prime \vert \vert_\infty \left(\lim_{\lambda \to 0} \frac{1}{\lambda} a \ln \left(1 + \frac{\lambda}{b} \right)\right)\\
	% &=\vert \vert u^\prime \vert \vert_\infty \frac{a}{b}.
	%	\end{align*}
	%	The same goes for $\mathcal{D}_{a,b}^- u(x)$.
	%\end{remark}
	\begin{remark}
		From \cite[(37)]{colantoni2021inverse}, we know that $\overline{\Pi}_{a,b}(x)=a E_1(bx)$, where $E_1$ is the exponential integral
		\begin{align}
		\label{expint}
		E_1(x):=\int_x^\infty \frac{e^{-z}}{z} dz. 
		\end{align}
		This allows us to write \eqref{D+} and \eqref{D-} in a compact form.
	\end{remark}
	%\begin{remark}
	% We define \eqref{D+} and \eqref{D-}, respectively, for $u$ such that
	% \begin{align*}
	%\frac{\partial}{\partial x} u(x-s) \overline{\Pi}_{a,b} (s) \in L^1(0,\infty); \quad \frac{\partial}{\partial x} u(x+s) \overline{\Pi}_{a,b} (s) \in L^1(0,\infty).
	% \end{align*}
	%\end{remark}
	From a simple change of variables, our definitions \eqref{D+} and \eqref{D-} concur with \cite[Definition 2.8]{toaldo} and we achieve 
	\begin{align*}
	\mathcal{D}_{a,b}^+ u(x)&:=\int_{0}^{\infty} \frac{\partial}{\partial x} u(x-s) \overline{\Pi}_{a,b} (s) ds,\\
	\mathcal{D}_{a,b}^- u(x)&:=-\int_{0}^{\infty} \frac{\partial}{\partial x} u(x+s) \overline{\Pi}_{a,b} (s) ds.
	\end{align*}
	The importance of the operators $\mathcal{D}_{a,b}^+$ and $\mathcal{D}_{a,b}^-$ is due to the fact that we can easily compute the characteristic function, essential to the study of L$\acute{e}$vy processes. Indeed, by recalling \eqref{lapTailLm}, we have
	\begin{align}
	\label{charD+}
	\int_{-\infty}^\infty e^{i x \xi } \mathcal{D}_{a,b}^+ u(x) dx&=(-i\xi) \int_{-\infty}^\infty e^{i x \xi } \int_{0}^{\infty}  u(x-s) \overline{\Pi}_{a,b} (s) ds \, dx \notag\\
	&=(-i\xi) \hat{u}(\xi) \int_0^\infty  e^{i s \xi } \overline{\Pi}_{a,b} (s) ds  \notag \\
	&=a  \ln \left(1 - \frac{i\xi}{b} \right) \hat{u}(\xi).
	\end{align}
	Similarly we obtain
	\begin{align}
	\label{charD-}
	\int_{-\infty}^\infty e^{i x \xi } \mathcal{D}_{a,b}^- u(x) dx&=a  \ln \left(1 + \frac{i\xi}{b} \right) \hat{u}(\xi).
	\end{align}
	These results coincide with \cite[Lemma 2.9]{toaldo}.
	%\begin{remark}
	%	When the derivative of $u$ is bounded by an integrable function, from the Leibniz integral rule, the operators \eqref{D+} and \eqref{D-} coincide with the those introduced in \cite[Definition 2.8]{toaldo}.
	%\end{remark}
	%Indeed, using \cite[Lemma 2.9]{toaldo}, we have
	%\begin{align}
	%\label{charD+}
	%\int_{-\infty}^\infty e^{i x \xi } \mathcal{D}_{a,b}^+ u(x) dx&=a  \ln \left(1 - \frac{i\xi}{b} \right) \hat{u}(\xi);\\
	%\label{charD-}
	%\int_{-\infty}^\infty e^{i x \xi } \mathcal{D}_{a,b}^+ u(x) dx&=a  \ln \left(1 + \frac{i\xi}{b} \right) \hat{u}(\xi)
	%\end{align}
	We now analyze the connection between these operators and the Variance Gamma process by exploiting \eqref{defVG:difGamma}.
	\begin{theorem}
		\label{thm:non-local}
		Let $p(t,x)$ be the probability density function of the Variance Gamma process $X$. Then we have
		\begin{align*}
		\frac{\partial}{\partial t} p(t,x) =-\left(\mathcal{D}_{a,\sqrt{b}}^+p(t,x) +  \mathcal{D}_{a,\sqrt{b}}^-p(t,x) \right), \quad t>0, x \in \mathbb{R}
		\end{align*}
		with $p(0,x)=\delta(x)$.
	\end{theorem}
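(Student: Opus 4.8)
The plan is to carry out the whole argument on the Fourier side, where all three terms of the asserted identity become multipliers acting on $\hat p(t,\xi)$, and then to invert. Conceptually this is nothing but the generator‑level identity $\Phi(-\Delta)=\mathcal{D}_{a,\sqrt b}^++\mathcal{D}_{a,\sqrt b}^-$ (applied to $p(t,\cdot)$), which is the counterpart of the decomposition $X=G-L$ in \eqref{defVG:difGamma}; combined with the already available equation \eqref{eq:Phillips} it would give the claim at once, but I prefer to argue directly. First I would record, from \eqref{charVG}, that $\hat p(t,\xi)=e^{-at\ln(1+\xi^2/b)}$, and then apply \eqref{charD+} and \eqref{charD-} with $b$ replaced by $\sqrt b$ and $u=p(t,\cdot)$, getting
\begin{align*}
\int_{-\infty}^\infty e^{ix\xi}\,\mathcal{D}_{a,\sqrt b}^+ p(t,x)\,dx&=a\ln\!\Bigl(1-\tfrac{i\xi}{\sqrt b}\Bigr)\hat p(t,\xi),\\
\int_{-\infty}^\infty e^{ix\xi}\,\mathcal{D}_{a,\sqrt b}^- p(t,x)\,dx&=a\ln\!\Bigl(1+\tfrac{i\xi}{\sqrt b}\Bigr)\hat p(t,\xi).
\end{align*}

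The core of the proof is then the elementary identity behind \eqref{defVG:difGamma}: for real $\xi$ the numbers $1-i\xi/\sqrt b$ and $1+i\xi/\sqrt b$ are complex conjugates, so their principal logarithms are conjugate as well and their sum is $\ln\bigl|1+i\xi/\sqrt b\bigr|^2=\ln(1+\xi^2/b)$; hence the two multipliers above add up to $a\ln(1+\xi^2/b)$. On the other hand, differentiating \eqref{charVG} in $t$ gives $\partial_t\hat p(t,\xi)=-a\ln(1+\xi^2/b)\,\hat p(t,\xi)$, and putting these together yields
\begin{align*}
\frac{\partial}{\partial t}\hat p(t,\xi)=-\left(\int_{-\infty}^\infty e^{ix\xi}\,\mathcal{D}_{a,\sqrt b}^+ p(t,x)\,dx+\int_{-\infty}^\infty e^{ix\xi}\,\mathcal{D}_{a,\sqrt b}^- p(t,x)\,dx\right).
\end{align*}
Inverting the Fourier transform gives the stated equation, and the initial condition is just $\hat p(0,\xi)\equiv 1$, i.e.\ $p(0,\cdot)=\delta$.

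What remains is to justify the formal manipulations. Using $\overline{\Pi}_{a,\sqrt b}(x)=aE_1(\sqrt b\,x)$ --- which is integrable at $0$, where it is only logarithmically singular, and exponentially small at $+\infty$ --- together with the exponential decay of $p(t,\cdot)$ read off from the asymptotics of $K_{at-1/2}$ in \eqref{densityVG}, I would verify the $L^1$ conditions defining $\mathcal{D}_{a,\sqrt b}^\pm$, and the same bounds would license differentiating under the integral in \eqref{D+}--\eqref{D-} and the use of Fubini in \eqref{charD+}--\eqref{charD-}; differentiating $\hat p$ under the $t$-integral is harmless since $a\ln(1+\xi^2/b)(1+\xi^2/b)^{-at}$ is dominated locally uniformly in $t>0$. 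The hard part will be making the Fourier inversion, hence the pointwise statement, precise uniformly in $t$: for $at>1$ the density is bounded and $\hat p(t,\cdot)\in L^1(\mathbb R)$, so inversion is classical and both sides are continuous in $x$, whereas for $at\le 1$ the density inherits from $h$ a singularity at the origin and the identity should be read in the sense of tempered distributions --- which is in any case the setting in which the initial datum $\delta$ lives. I expect this issue, rather than the algebra, to be the only genuine obstacle.
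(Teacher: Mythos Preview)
Your approach is essentially identical to the paper's: both compute the Fourier transform of each side, invoke \eqref{charD+}--\eqref{charD-} with $b$ replaced by $\sqrt{b}$, use $\ln(1-i\xi/\sqrt b)+\ln(1+i\xi/\sqrt b)=\ln(1+\xi^2/b)$, and match against $\partial_t\hat p(t,\xi)=-a\ln(1+\xi^2/b)\hat p(t,\xi)$, with the initial condition read off from $\hat p(0,\xi)=1$. The paper's argument is actually terser and purely formal---it does not discuss the $L^1$ hypotheses on $\overline{\Pi}_{a,\sqrt b}$, Fubini, or the distinction between $at>1$ and $at\le1$ for Fourier inversion---so your added justifications go beyond what the paper supplies rather than differing from it.
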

	\begin{proof}
		The initial value can be easily checked, since the Fourier transform of the $\delta$ distribution is $1$.  The characteristic function of the left-hand side, with respect to $x$, is given by
		\begin{align*}
		\int_{-\infty}^\infty e^{i x \xi }\frac{\partial}{\partial t} p(t,x)dx= \frac{\partial}{\partial t} e^{-a t \ln \left(1 + \frac{\xi^2}{b} \right)}= - a \ln \left(1 + \frac{\xi^2}{b} \right) e^{-a t \ln \left(1 + \frac{\xi^2}{b} \right)}.
		\end{align*}
		For the right-hand side, using \eqref{charD+} and \eqref{charD-}, we have
		\begin{align*}
		&-\int_{-\infty}^\infty e^{i x \xi }\left(\mathcal{D}_{a,\sqrt{b}}^+p(t,x) +  \mathcal{D}_{a,\sqrt{b}}^-p(t,x) \right)dx\\
		&= -\left(a  \ln \left(1 - \frac{i\xi}{\sqrt{b}} \right)e^{-a t \ln \left(1 + \frac{\xi^2}{b} \right)} + a  \ln \left(1 + \frac{i\xi}{\sqrt{b}} \right)e^{-a t \ln \left(1 + \frac{\xi^2}{b} \right)}\right)\\
		&=-a \ln \left(1 + \frac{\xi^2}{b} \right) e^{-a t \ln \left(1 + \frac{\xi^2}{b} \right)},
		\end{align*}
		then our claim holds true.
	\end{proof}
	\begin{remark}
		The fact that, in the last theorem, there is a sum of non-local derivatives is not new in the theory of probability and non-local operators. For example, for the Brownian motion time-changed with an independent stable subordinator, we know that the one dimensional Riesz derivative can be written as sum of Marchaud derivatives (see \cite[page 12]{ferrari2018weyl}).
	\end{remark}
	In Theorem \ref{thm:non-local} we took advantage of the possible definition of the process $X$ as difference of two independent Gamma subordinators. We observe that this information on the process bring us to the next result for the operators.
	\begin{corollary}
		Let $u$ be a function such that \eqref{Phillips}, \eqref{D+} and \eqref{D-} are well defined, then the following equivalence is satisfied
		\begin{align*}
		-\Phi(-\Delta) u(x)=-\left(\mathcal{D}_{a,\sqrt{b}}^+ u(x) +  \mathcal{D}_{a,\sqrt{b}}^-u(x \right), \quad x \in \mathbb{R}.
		\end{align*}
	\end{corollary}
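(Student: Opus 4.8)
The plan is to prove the identity by comparing Fourier transforms, exactly as in the proof of Theorem~\ref{thm:non-local}, but now at the level of the operators acting on a generic test function rather than on the density $p(t,\cdot)$. First I would record that, by hypothesis, all three quantities $-\Phi(-\Delta)u$, $\mathcal{D}_{a,\sqrt b}^+u$ and $\mathcal{D}_{a,\sqrt b}^-u$ are well defined, and — after possibly restricting $u$ to a convenient dense class (Schwartz functions, or $u\in L^1(\mathbb{R})$ satisfying the integrability conditions stated just after \eqref{D-}) — they are integrable in $x$, so that taking $\int_{-\infty}^\infty e^{ix\xi}\,\cdot\,dx$ is legitimate on both sides of the claimed equation.

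Next I would apply \eqref{charPhillips} to obtain
\[
\int_{-\infty}^\infty e^{ix\xi}\bigl(-\Phi(-\Delta)u(x)\bigr)\,dx=-a\ln\!\Bigl(1+\frac{\xi^2}{b}\Bigr)\hat u(\xi),
\]
and then \eqref{charD+} and \eqref{charD-}, with the parameter $b$ replaced by $\sqrt b$, to obtain
\[
\int_{-\infty}^\infty e^{ix\xi}\bigl(\mathcal{D}_{a,\sqrt b}^+u(x)+\mathcal{D}_{a,\sqrt b}^-u(x)\bigr)\,dx=a\Bigl[\ln\!\Bigl(1-\frac{i\xi}{\sqrt b}\Bigr)+\ln\!\Bigl(1+\frac{i\xi}{\sqrt b}\Bigr)\Bigr]\hat u(\xi).
\]
I would then simplify the bracket using $\bigl(1-\tfrac{i\xi}{\sqrt b}\bigr)\bigl(1+\tfrac{i\xi}{\sqrt b}\bigr)=1+\tfrac{\xi^2}{b}$, so that the right-hand side equals $a\ln(1+\xi^2/b)\hat u(\xi)$, which is precisely the negative of the Fourier transform computed above. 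Since the two sides of the asserted identity thus share the same Fourier transform, injectivity of the Fourier transform on the chosen class of functions yields the equality (pointwise if the functions involved are continuous, otherwise almost everywhere).

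The only genuinely delicate point is the bookkeeping of the complex logarithm: one must check that the principal branches used in \eqref{charD+}–\eqref{charD-} are consistent, i.e.\ that $\ln(1-i\xi/\sqrt b)+\ln(1+i\xi/\sqrt b)$ really equals $\ln\bigl((1-i\xi/\sqrt b)(1+i\xi/\sqrt b)\bigr)=\ln(1+\xi^2/b)$ with no spurious $2\pi i$. This is true here because the two factors are complex conjugates lying in the right half-plane, so their arguments are opposite and cancel; still, it is the step I would be most careful to state explicitly. Everything else is a direct transcription of the computation already performed in Theorem~\ref{thm:non-local} — indeed, the Corollary is exactly that theorem "read without the semigroup." As an alternative, \emph{Fourier-free} route one could instead convolve the identity $\tfrac{\partial}{\partial t}p(t,x)=-\Phi(-\Delta)p(t,x)=-(\mathcal{D}^+_{a,\sqrt b}+\mathcal{D}^-_{a,\sqrt b})p(t,x)$ against $u$, use $p(t,\cdot)\to\delta$ as $t\to0^+$, and let $t\to0^+$; but the Fourier argument above is shorter and cleaner.
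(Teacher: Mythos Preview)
Your proposal is correct and follows essentially the same approach as the paper's own proof: both compute the Fourier transform of each side via \eqref{charPhillips}, \eqref{charD+}, and \eqref{charD-} (with parameter $\sqrt b$) and observe that they coincide. Your extra remarks on the branch of the logarithm and on injectivity of the Fourier transform are careful additions, but the core argument is identical to the paper's.
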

	\begin{proof}
		We easily provide this result by using the characteristic function. On the left-hand side we have, from \eqref{charPhillips},
		\begin{align*}
		\int_{-\infty}^\infty e^{i \xi x} \left(-\Phi(-\Delta) u(x) \right) dx=-\left({a \ln\left(1+\frac{\xi^2}{b} \right)} \right) \hat{u}(\xi).
		\end{align*}
		On the right-hand side, from \eqref{charD+} and \eqref{charD-}, we have
		\begin{align*}
		&-	\int_{-\infty}^\infty e^{i x \xi }\left(\mathcal{D}_{a,\sqrt{b}}^+u(x) +  \mathcal{D}_{a,\sqrt{b}}^-u(x) \right)dx\\
		&= -\left(a  \ln \left(1 - \frac{i\xi}{\sqrt{b}} \right)\hat{u}(\xi) + a  \ln \left(1 + \frac{i\xi}{\sqrt{b}} \right)\hat{u}(\xi)\right)\\
		&=-a \ln \left(1 + \frac{\xi^2}{b} \right) \hat{u}(\xi),
		\end{align*}
		that concludes the proof.
	\end{proof}
	\textbf{Adding the drift.} 
	We now focus on the Variance Gamma process with drift and we provide that the non-local equation is still true. Let $B^\theta:=\{B_t^\theta, t \geq 0\}$ be the drifted Brownian motion on $\mathbb{R}$ starting from zero, independent from $H$, and $g^\theta(t, x) = e^{-(x-\theta t)^2/ 4t}/\sqrt{4 \pi t}$, for the drift $\theta \in \mathbb{R}$, be its probability density function. The drifted Variance Gamma process $X^\theta:=\{X_t^\theta, t \geq 0\}$ is $X^\theta=B^\theta_H := B^\theta \circ H$. Its characteristic function turns out to be
	\begin{align*}
	\mathbf{E}_0[e^{i \xi X_t^\theta}]=\left(1-i\xi \frac{\theta}{b}+\frac{\xi^2}{b}\right)^{-at}.
	\end{align*}
	We see that, again, the process can be written as difference of two independent Gamma subordinators, as suggested by \cite[(8)]{madan1998variance}, indeed the following holds
	\begin{align*}
	\left(1-i\xi \frac{\theta}{b}+\frac{\xi^2}{b}\right)^{-at}=\left(1-\frac{i\xi}{\sqrt{\frac{\theta^2}{4} + b} -\frac{\theta}{2}}\right)^{-at}\left(1+\frac{i\xi}{\sqrt{\frac{\theta^2}{4} + b} +\frac{\theta}{2}}\right)^{-at},
	\end{align*}
	thus we have $X^\theta=G^\theta-L^\theta$, where $G^\theta$ and $L^\theta$ are two independent Gamma subordinators, with parameters $a$ and $\sqrt{\frac{\theta^2}{4} + b} -\frac{\theta}{2}$ the first one and with parameters $a$ and $\sqrt{\frac{\theta^2}{4} + b} +\frac{\theta}{2}$ the second one. As well as the Variance Gamma process, we show the next result.
	\begin{corollary}
		Let $p^\theta(t,x)$ be the probability density function of the drifted Variance Gamma process $X^\theta$. Then we have
		\begin{align*}
		\frac{\partial}{\partial t} p^\theta(t,x) =-\left(\mathcal{D}_{a,\sqrt{\frac{\theta^2}{4} + b} -\frac{\theta}{2}}^+p^\theta(t,x) +  \mathcal{D}_{a,\sqrt{\frac{\theta^2}{4} + b} +\frac{\theta}{2}}^-p^\theta(t,x) \right), \quad t>0, x \in \mathbb{R}
		\end{align*}
		with $p^\theta(0,x)=\delta(x)$.
	\end{corollary}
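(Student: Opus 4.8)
The plan is to repeat, almost verbatim, the argument of Theorem~\ref{thm:non-local}, working entirely on the Fourier side and exploiting the factorization of the characteristic exponent displayed just before the statement. First I would dispose of the initial datum: since the Fourier transform of $\delta$ is $1$ and $\widehat{p^\theta}(t,\xi)=\bigl(1-i\xi\theta/b+\xi^2/b\bigr)^{-at}\to 1$ as $t\downarrow 0$, the condition $p^\theta(0,x)=\delta(x)$ holds in the distributional sense.

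Next I would Fourier-transform the left-hand side. Writing $\widehat{p^\theta}(t,\xi)=\exp\{-at\ln(1-i\xi\theta/b+\xi^2/b)\}$ and differentiating in $t$ gives
\[
\int_{-\infty}^\infty e^{ix\xi}\,\frac{\partial}{\partial t}p^\theta(t,x)\,dx
= -\,a\ln\!\Big(1-i\xi\tfrac{\theta}{b}+\tfrac{\xi^2}{b}\Big)\,\widehat{p^\theta}(t,\xi).
\]
For the right-hand side I would apply \eqref{charD+} and \eqref{charD-} with $\beta_1:=\sqrt{\theta^2/4+b}-\theta/2$ and $\beta_2:=\sqrt{\theta^2/4+b}+\theta/2$ in place of $b$, obtaining the multiplier $-\bigl(a\ln(1-i\xi/\beta_1)+a\ln(1+i\xi/\beta_2)\bigr)$ against $\widehat{p^\theta}(t,\xi)$. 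The only genuinely new ingredient is the elementary identity $(1-i\xi/\beta_1)(1+i\xi/\beta_2)=1-i\xi\theta/b+\xi^2/b$, which is exactly the factorization written above the corollary; it reduces to $\beta_1\beta_2=b$ and $\beta_2-\beta_1=\theta$. Passing to logarithms turns the sum of the two logs into $\ln(1-i\xi\theta/b+\xi^2/b)$, so the two Fourier transforms coincide, and injectivity of the Fourier transform yields the equation.

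Before invoking \eqref{charD+}--\eqref{charD-} I would also remark that $\beta_1,\beta_2>0$, since $\sqrt{\theta^2/4+b}>|\theta|/2$ (here $b>0$ is used); hence $G^\theta$ and $L^\theta$ are bona fide Gamma subordinators and the generalized Weyl derivatives $\mathcal D^{+}_{a,\beta_1}$, $\mathcal D^{-}_{a,\beta_2}$ are well defined on $p^\theta(t,\cdot)$ — the required integrability ($p^\theta(t,s)\overline{\Pi}_{a,\beta_i}(\cdot)\in L^1$) being checked as in the driftless case from the exponential decay of $p^\theta(t,\cdot)$ together with $\overline{\Pi}_{a,\beta_i}(z)=aE_1(\beta_i z)$ from the Remark. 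There is no serious obstacle: the statement is the drifted mirror of Theorem~\ref{thm:non-local}, and everything is routine once the factorization of the characteristic exponent is recorded.
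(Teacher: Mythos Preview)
Your proposal is correct and follows essentially the same approach as the paper: the paper's proof simply notes that $p^\theta$ admits the representation \eqref{ghVG} with $g^\theta$ in place of $g$, invokes the factorization $\bigl(1-i\xi\theta/b+\xi^2/b\bigr)=\bigl(1-i\xi/\beta_1\bigr)\bigl(1+i\xi/\beta_2\bigr)$, and declares the remainder analogous to Theorem~\ref{thm:non-local}. Your write-up is in fact more detailed than the paper's, spelling out the positivity of $\beta_1,\beta_2$ and the well-definedness of the Weyl derivatives.
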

	\begin{proof}
		We observe that $p^\theta(t,x)$ can be written as \eqref{ghVG}, where $g^\theta$ replaces $g$. By using that 
		\begin{align*}
		\left(1-i\xi \frac{\theta}{b}+\frac{\xi^2}{b}\right)=\left(1-\frac{i\xi}{\sqrt{\frac{\theta^2}{4} + b} -\frac{\theta}{2}}\right)\left(1+\frac{i\xi}{\sqrt{\frac{\theta^2}{4} + b} +\frac{\theta}{2}}\right),
		\end{align*}
		the proof is analogous to the one of Theorem \ref{thm:non-local}.
	\end{proof}
	\begin{remark}
		Trivially, if $\theta=0$, the last differential equation coincides with the one of Theorem \ref{thm:non-local} as expected.
	\end{remark}
	\subsection{Variance Gamma and special functions}\hfill\\
	In this section we examine how special functions bring us to a new equation for the Variance Gamma process. From \eqref{densityVG}, we have seen that the modified Bessel function $K_\nu(x)$ appears in the density and this special function solves (\cite[formula 9.6.1]{abramowitz1964handbook})
	\begin{align*}
	x^2 \frac{\partial^2}{\partial x^2} u(x) + x \frac{\partial}{\partial x} u(x)- (x^2+\nu^2) u(x)=0,
	\end{align*}
	where, in our case, $\nu$ is a time function. An other interesting fact on the function $K_\nu(x)$ is that it can be written in terms of the Kummer's  (confluent hypergeometric) function usually denoted by $U$, then it is connected to Kummer's equation. This moves our thinking about differential space equations for the Variance Gamma process.
	\begin{theorem}
		The following differential equation is satisfied by the density of the Variance Gamma process $X$:
		\begin{align}
		\label{eq2}
		x \frac{\partial^2}{\partial x^2} p(t,x) -(2at-2) \frac{\partial}{\partial x} p(t,x) - b x p(t,x)=0, \quad t>0,\, x \in \mathbb{R}
		\end{align}
		with $p(0,x)=\delta(x)$.
	\end{theorem}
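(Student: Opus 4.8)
The plan is to verify \eqref{eq2} directly from the closed form \eqref{densityVG} of $p$, reducing it to the classical ordinary differential equation solved by the modified Bessel function. First I would set $\nu=\nu(t):=at-\tfrac12$, note that $2at-2=2\nu-1$, and rewrite \eqref{densityVG} as
\[
p(t,x)=c(t)\,|x|^{\nu}\,K_{\nu}\!\left(\sqrt{b}\,|x|\right),\qquad x\neq0,
\]
where $c(t)=\dfrac{b^{at}}{\sqrt{\pi}\,\Gamma(at)}\,(2\sqrt{b})^{-\nu}$ does not depend on $x$. Each of the three terms in \eqref{eq2} is an odd function of $x$ (since $p(t,\cdot)$ is even, $\partial_x p$ is odd and $\partial_x^2 p$ is even), so it will suffice to establish \eqref{eq2} for $x>0$; the case $x<0$ then follows by oddness, and the behaviour at the origin is carried by the initial datum.

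Next I would pass from $p$ to $K_{\nu}$ alone. With $z=\sqrt{b}\,x$ and $v(z):=z^{\nu}K_{\nu}(z)$ one has $p(t,x)=c(t)\,b^{-\nu/2}\,v(\sqrt{b}\,x)$ for $x>0$, so that, after dividing by the constant $c(t)\,b^{-\nu/2}$ and rescaling by a power of $\sqrt b$, \eqref{eq2} is equivalent to the claim that $v$ solves
\[
z\,v''(z)-(2\nu-1)\,v'(z)-z\,v(z)=0,\qquad z>0.
\]
To obtain this I would insert the substitution $K_{\nu}(z)=z^{-\nu}v(z)$ into the Bessel equation $z^2K_{\nu}''(z)+zK_{\nu}'(z)-(z^2+\nu^2)K_{\nu}(z)=0$ of \cite[formula 9.6.1]{abramowitz1964handbook}: the coefficient of $z^{-\nu}v$ collapses because $\nu(\nu+1)-\nu-\nu^2=0$, and after multiplying through by $z^{\nu-1}$ exactly the displayed second-order equation for $v$ remains. (Equivalently, one may use the recurrences $\frac{d}{dz}\!\left(z^{\nu}K_{\nu}(z)\right)=-z^{\nu}K_{\nu-1}(z)$ and $K_{\nu-1}'(z)=-K_{\nu}(z)+\tfrac{\nu-1}{z}K_{\nu-1}(z)$ to reach the same ODE.)

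Finally, the initial condition $p(0,x)=\delta(x)$ is immediate, since $X_0=0$ almost surely, i.e. $p(t,\cdot)\to\delta$ weakly as $t\to0^{+}$, which is also visible from \eqref{charVG}. The only delicate point is the bookkeeping at the origin: depending on $at$ the density may be unbounded as $x\to0$ (the time-dependent continuity of $h$, hence of $p$, was recalled after \eqref{repHsing}), so \eqref{eq2} is to be understood as a pointwise identity on $\mathbb{R}\setminus\{0\}$; away from $0$ every step above is elementary and I expect no real obstacle.
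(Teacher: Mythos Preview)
Your argument is correct and complete: the substitution $K_{\nu}(z)=z^{-\nu}v(z)$ into the modified Bessel equation indeed yields $zv''-(2\nu-1)v'-zv=0$ after the cancellation $\nu(\nu+1)-\nu-\nu^{2}=0$, and the chain rule with $z=\sqrt{b}\,x$ carries this back to \eqref{eq2} on $(0,\infty)$; the oddness observation handles $x<0$.

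The paper, however, takes a different route. Instead of working from the Bessel representation \eqref{densityVG}, it applies the Fourier transform in $x$ to the whole of \eqref{eq2}, using that multiplication by $x$ becomes $-i\partial_{\xi}$ and that $\hat p(t,\xi)=(1+\xi^{2}/b)^{-at}$ from \eqref{charVG}; a direct algebraic computation then shows the transformed expression vanishes identically in $\xi$. Your approach is arguably more in the spirit of the section's heading, since it makes explicit that \eqref{eq2} is nothing but the Bessel ODE after the change of dependent variable $p=c(t)\,|x|^{\nu}K_{\nu}$, and it requires no Fourier machinery. The paper's Fourier computation, on the other hand, treats the identity distributionally on all of $\mathbb{R}$ in one stroke (no separate handling of $x>0$, $x<0$, or the singularity at the origin) and stays within the characteristic-function framework used throughout the article.
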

	\begin{proof}
		The initial value can be easily checked, since the Fourier transform of the $\delta$ is $1$.  The characteristic function, with respect to $x$, of \eqref{eq2} is 
		\begingroup
		\allowdisplaybreaks
		\begin{align*}
		&\int_{-\infty}^\infty e^{i x \xi } \left(x \frac{\partial^2}{\partial x^2} p(t,x) -(2at-2) \frac{\partial}{\partial x} p(t,x) - b x p(t,x)\right) \,dx=\\
		&=-i\frac{\partial}{\partial \xi} \int_{-\infty}^\infty e^{i x \xi }  \frac{\partial^2}{\partial x^2} p(t,x)\,dx - (2at-2)(-i\xi) \int_{-\infty}^\infty e^{i x \xi }  p(t,x)\,dx -b(-i) \frac{\partial}{\partial \xi} \int_{-\infty}^\infty e^{i x \xi }  p(t,x)\,dx\\
		&=-i \frac{\partial}{\partial \xi} (-\xi^2) e^{-a t \ln \left(1 + \frac{\xi^2}{b} \right)} + (2at-2)(i\xi) e^{-a t \ln \left(1 + \frac{\xi^2}{b} \right)}-ib2at \frac{\xi}{b} \left(1+\frac{\xi^2}{b}\right)^{-at-1}\\
		&=2i\xi  \left(1+\frac{\xi^2}{b}\right)^{-at} - 2i\xi^2 at \frac{\xi}{b} \left(1+\frac{\xi^2}{b}\right)^{-at-1} +(2at-2)(i\xi) \left(1+\frac{\xi^2}{b}\right)^{-at} -2iat {\xi} \left(1+\frac{\xi^2}{b}\right)^{-at-1}\\
		&=- 2i\xi^2 at \frac{\xi}{b} \left(1+\frac{\xi^2}{b}\right)^{-at-1} +2iat\xi\left(1+\frac{\xi^2}{b}\right)^{-at}-2iat {\xi} \left(1+\frac{\xi^2}{b}\right)^{-at-1}\\
		&=- 2i\xi^2 at \frac{\xi}{b} \left(1+\frac{\xi^2}{b}\right)^{-at-1}+2iat\xi\left(1+\frac{\xi^2}{b}\right)\left(1+\frac{\xi^2}{b}\right)^{-at-1}-2iat {\xi} \left(1+\frac{\xi^2}{b}\right)^{-at-1}\\
		&=- 2i\xi^2 at \frac{\xi}{b} \left(1+\frac{\xi^2}{b}\right)^{-at-1}+ 2i\xi^2 at \frac{\xi}{b} \left(1+\frac{\xi^2}{b}\right)^{-at-1} +2iat {\xi} \left(1+\frac{\xi^2}{b}\right)^{-at-1}-2iat {\xi} \left(1+\frac{\xi^2}{b}\right)^{-at-1}\\
		&=0,
		\end{align*}
		\endgroup
		as required.
	\end{proof}
	\subsection{Compound Poisson process convergence}\hfill\\
	The convergence of compound Poisson processes to subordinators has been extensively studied. In \cite[Theorem 1]{dovidio2014continuous} the author shows the connection to a difference of $\alpha-$stable subordinators and in \cite[Proposition 3.3]{toaldo} we have the construction for any subordinator. In this section, we exploit these mentioned results to obtain the convergence of a compound Poisson process to the Variance Gamma process.
	
	We consider the i.i.d. random variables $Y_j$, $Y_j \sim Y$ (distributed as $Y$), with probability density function
	\begin{align*}
	\nu_{Y}(y)=\frac{e^{-\sqrt{b} y}}{y} \frac{1}{E_1(\sqrt{b} \gamma)} \mathbf{1}_{y \geq \gamma}, \quad \gamma>0, 
	\end{align*}
	where $E_1$ is defined in \eqref{expint}. Let $\epsilon_j \sim \epsilon$ be i.i.d. (centered)  Rademacher random variables, with law
	\begin{align*}
	\mathbf{P}(\epsilon=+1)=\frac{1}{2}, \quad \mathbf{P}(\epsilon=-1)=\frac{1}{2}.
	\end{align*}
	Now, we define $Y^*=\epsilon Y$ with probability density function
	\begin{align}
	\label{lawY*}
	\nu^*_{Y}=\frac{1}{2} \nu_{Y}(-y) + \frac{1}{2} \nu_{Y}(y). 
	\end{align}
	We are ready to provide the following convergence result.
	\begin{corollary}
		Let $N(t)$, $t \geq 0$, be a homogeneous Poisson process with parameter $1$, independent from the i.i.d random variables $Y^*_j \sim Y^*$, with law \eqref{lawY*}. We have that
		\begin{align}
		\label{convergence}
		\left(\sum_{j=0}^{N(t a E_1(\sqrt{b} \gamma))} Y^*_j\right) \stackrel{law}{\rightarrow} X_{\frac{t}{2}} \quad \text{as }\gamma \to 0.
		\end{align}
	\end{corollary}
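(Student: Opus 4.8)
The plan is to prove the convergence in \eqref{convergence} by showing convergence of characteristic functions, invoking L\'evy's continuity theorem. First I would recall that a compound Poisson sum $S(t) = \sum_{j=0}^{N(\lambda t)} Z_j$ with $Z_j \sim Z$ has characteristic function $\mathbf{E}_0[e^{i\xi S(t)}] = \exp\left( \lambda t \left( \mathbf{E}_0[e^{i\xi Z}] - 1 \right) \right)$. Here the Poisson clock runs at rate $a E_1(\sqrt{b}\gamma)$ (times the external time $t$), and the jumps are $Y^*_j \sim Y^* = \epsilon Y$, so I would compute
\begin{align*}
\mathbf{E}_0[e^{i\xi Y^*}] = \int_{-\infty}^{\infty} e^{i\xi y}\, \nu^*_Y(y)\, dy = \frac{1}{2}\int_\gamma^\infty \left( e^{i\xi y} + e^{-i\xi y} \right) \frac{e^{-\sqrt{b} y}}{y}\, \frac{1}{E_1(\sqrt{b}\gamma)}\, dy.
\end{align*}
Hence the characteristic function of the left-hand side of \eqref{convergence} at time $t$ is
\begin{align*}
\exp\left( t\, a\, E_1(\sqrt{b}\gamma) \left( \mathbf{E}_0[e^{i\xi Y^*}] - 1 \right) \right) = \exp\left( t\, a \left( \frac{1}{2}\int_\gamma^\infty \left( e^{i\xi y} + e^{-i\xi y} - 2 \right) \frac{e^{-\sqrt{b} y}}{y}\, dy \right) \right),
\end{align*}
where the factor $E_1(\sqrt{b}\gamma)$ cancels precisely against the normalizing constant in $\nu_Y$ — this cancellation is the reason for the particular time-scaling chosen in the statement.

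Next I would let $\gamma \to 0$. The integrand $\left( e^{i\xi y} + e^{-i\xi y} - 2 \right) e^{-\sqrt{b} y}/y$ is integrable on $(0,\infty)$: near $y = 0$ the bracket is $O(y^2)$ so the singularity $1/y$ is absorbed, and near $y = \infty$ the exponential decay dominates. By dominated (or monotone) convergence the integral over $(\gamma,\infty)$ converges to the integral over $(0,\infty)$. Then I would recognize, using the Bernstein representation \eqref{symbGamma} of $\Phi$ with parameter $b$ replaced by $\sqrt{b}$ (and splitting $i\xi$ versus $-i\xi$), that
\begin{align*}
a \int_0^\infty \left( e^{i\xi y} - 1 \right) \frac{e^{-\sqrt{b} y}}{y}\, dy = -a \ln\left( 1 - \frac{i\xi}{\sqrt{b}} \right), \qquad a \int_0^\infty \left( e^{-i\xi y} - 1 \right) \frac{e^{-\sqrt{b} y}}{y}\, dy = -a \ln\left( 1 + \frac{i\xi}{\sqrt{b}} \right),
\end{align*}
so the limiting exponent is
\begin{align*}
\frac{t}{2}\left( -a \ln\left( 1 - \frac{i\xi}{\sqrt{b}} \right) - a \ln\left( 1 + \frac{i\xi}{\sqrt{b}} \right) \right) = -\frac{t}{2}\, a \ln\left( 1 + \frac{\xi^2}{b} \right),
\end{align*}
which is exactly $\log \mathbf{E}_0[e^{i\xi X_{t/2}}]$ by \eqref{charVG}. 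Thus the characteristic functions converge pointwise in $\xi$ to the characteristic function of $X_{t/2}$, and since the limit is continuous at $\xi = 0$, L\'evy's continuity theorem yields convergence in distribution, proving \eqref{convergence}.

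The main obstacle — really the only non-routine point — is justifying the passage to the limit under the integral sign as $\gamma \to 0$, i.e.\ controlling the combined effect of the vanishing lower limit and the $\gamma$-dependent normalizing constant. Once one observes that $E_1(\sqrt{b}\gamma)$ cancels exactly and that the resulting integrand is dominated uniformly in $\gamma$ by the fixed integrable function $\left| e^{i\xi y} + e^{-i\xi y} - 2 \right| e^{-\sqrt{b} y}/y$ (with the $O(y^2)$ behaviour at the origin giving integrability there), the rest is the bookkeeping of matching the limiting L\'evy exponent against \eqref{symbGamma} and \eqref{charVG}. I would also remark in passing that this construction is precisely the specialization of \cite[Proposition 3.3]{toaldo} to the Gamma subordinator, combined with the symmetrization by the Rademacher variables $\epsilon_j$ that produces the difference-of-subordinators structure \eqref{defVG:difGamma}.
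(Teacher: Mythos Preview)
Your proof is correct and follows essentially the same route as the paper: compute the characteristic function of the compound Poisson sum, observe the cancellation of $E_1(\sqrt{b}\gamma)$, pass to the limit $\gamma\to 0$ to recover the Bernstein representation \eqref{symbGamma} with parameter $\sqrt{b}$, and conclude via L\'evy's continuity theorem and \eqref{defVG:difGamma}. If anything, you are more explicit than the paper in justifying the limit passage (the $O(y^2)$ dominated-convergence argument), which the paper simply asserts.
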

	\begin{proof}
		Since we are dealing with a compound Poisson process, then we know that
		\begin{align*}
		\mathbf{E}_0\left[ \exp\left(i \xi \sum_{j=0}^{N(t a E_1(\sqrt{b} \gamma))} Y^*_j \right)\right]&= \exp\left[t a E_1(\sqrt{b} \gamma)(\mathbf{E}_0\left[\exp(i \xi Y^*)\right] -1)\right]\\
		&=\exp\left[t a E_1(\sqrt{b} \gamma)(\mathbf{E}_0\left[\exp(i \xi \epsilon Y)\right] -1)\right]\\
		&=\exp\left[t a E_1(\sqrt{b} \gamma)\left(\frac{1}{2}\mathbf{E}_0\left[\exp(i \xi Y)\right] +\frac{1}{2}\mathbf{E}_0\left[\exp(-i \xi Y)\right] - \left(\frac{1}{2}+\frac{1}{2}\right)\right)\right]\\
		&=\exp\left[t a E_1(\sqrt{b} \gamma)\left(\frac{1}{2}\mathbf{E}_0[\exp(i \xi Y)-1] +\frac{1}{2}\mathbf{E}_0[\exp(-i \xi Y) -1] \right)\right]\\
		&=\exp\left[t\left(\frac{a}{2} \int_\gamma^\infty (e^{i\xi y} - 1)\frac{e^{-\sqrt{b} y}}{y} dy + \frac{a}{2} \int_\gamma^\infty (e^{-i\xi y} - 1)\frac{e^{-\sqrt{b} y}}{y} dy\right)\right]
		\end{align*} 
		If $\gamma \to 0$, we get that
		\begin{align*}
		&\exp\left[t\left(\frac{a}{2} \int_\gamma^\infty (e^{i\xi y} - 1)\frac{e^{-\sqrt{b} y}}{y} dy + \frac{a}{2} \int_\gamma^\infty (e^{-i\xi y} - 1)\frac{e^{-\sqrt{b} y}}{y} dy\right)\right]\\
		 &\to \exp\left[-\frac{t}{2}\left( a  \ln \left(1 - \frac{i\xi}{\sqrt{b}} \right) + a  \ln \left(1 + \frac{i\xi}{\sqrt{b}} \right)\right)\right],
		\end{align*}
		hence the claim holds by using \eqref{defVG:difGamma} and the classic L$\acute{e}$vy's continuity theorem.
	\end{proof}
	\section*{Acknowledgment}
	The author wishes to thank Mirko D'Ovidio for his helpful advice.
	\medskip
%		\bibliographystyle{plainurl} % We choose the "plain" reference style
%		\bibliography{vgeqbib.bib}

	\end{document}